\newtheorem{theorem}{Theorem}
\newtheorem{lemma}{Lemma}
\newcommand{\paren}[1]{\left( #1 \right)}
\newcommand{\abs}[1]{\left| #1 \right|}
\newcommand{\norm}[1]{\left\lVert #1 \right\rVert}
\begin{document}

\title[]{On differences of two harmonic numbers}

\author[]{Jeck Lim}
\address{Department of Mathematics, Caltech, Pasadena, CA 91125, USA}
\email{jlim@caltech.edu}

\author[]{Stefan Steinerberger}
\address{Department of Mathematics, University of Washington, Seattle, WA 98195, USA}
\email{steinerb@uw.edu}

\begin{abstract} We prove that the existence of infinitely many $(m_k, n_k) \in \mathbb{N}^2$ such that the difference of harmonic numbers $H_{m_k} - H_{n_k}$ approximates 1 well
$$ \lim_{k \rightarrow \infty} \left| \sum_{\ell = n}^{m_k} \frac{1}{\ell} - 1 \right|\cdot n_k^2 = 0.$$
This answers a question of Erd\H{o}s and Graham. The construction uses asymptotics for harmonic numbers, the precise nature of the continued fraction expansion of $e$ and a suitable rescaling of a subsequence of convergents. We also prove a quantitative rate by appealing to techniques of Heilbronn, Danicic, Harman, Hooley and others regarding $\min_{1 \leq n \leq N} \min_{m \in \mathbb{N}}\| n^2 \theta - m\|$.
\end{abstract}

\maketitle

\section{Differences of Harmonic Numbers}
\subsection{Introduction.} We are motivated by a problem of Erd\H{o}s and Graham \cite{erd}.
 \begin{quote}
     \textit{Choose $t = t(n)$ to be the least integer such that
     $$ \varepsilon_n = \sum_{k=n}^{t} \frac{1}{k} - 1 \geq 0.$$
     How small can $\varepsilon_n$ be? As far as we know this has not been looked at. It should be true that
     $ \liminf_{n} n^2 \varepsilon_n = 0$
     but perhaps $n^{2+\delta}\varepsilon_n \rightarrow \infty$ for every $\delta > 0$.
     } (P. Erd\H{o}s and R. Graham \cite[p.~41]{erd})
 \end{quote}

The problem is listed as Problem $\# 314$ in the list of Erd\H{o}s problems curated by Bloom \cite{bloom}. There is a natural heuristic: one has $t = (e+o(1))n$ and by the time the expression exceeds 1, it does so by an amount that is $\leq (1/e + o(1))/n$. However, the actual amount by which we exceed the threshold should be random and so we would expect it to be usually smaller. Assuming $\varepsilon_n$ to be a uniformly distributed random variable in the interval $[0, 1/(en)]$ leads to the prediction above. 
 
 \subsection{An elementary solution.}
 We address the first half of the problem. 
\begin{theorem}[Elementary version] For any $c>0$, there exists infinitely many $(m, n) \in \mathbb{N}^2$ with
$$1 \leq \sum_{\ell = n}^{m} \frac{1}{\ell} \leq 1 +  \frac{c}{n^2}.$$
\end{theorem}

The construction is fairly concrete: we describe how one can use a suitable subset of convergent fractions coming from the continued fraction expansion of $e$ to turn it into a pairs $(m, n) \in \mathbb{N}^2$ with the desired properties. The construction is  elementary, it does not require much beyond basics of the continued fraction expansion and is sufficient to resolve the question of Erd\H{o}s-Graham.

 \subsection{A refined solution.}
There is an ingredient in the construction that is slightly subtle: it deals with the problem of whether a particular real number can be well approximated by rational numbers of the form $a/b^2$. The trivial bound leads to Theorem 1. This question was studied by Danicic \cite{dan}, Harman \cite{harman}, Heilbronn \cite{heilbronn}, Hooley \cite{hooley} and others. Leveraging these techniques, we can say slightly more.

\begin{theorem}[Refined version] For every $\varepsilon > 0$, there exists infinitely many $(m, n) \in \mathbb{N}^2$ with
$$ \left| \sum_{\ell = n}^{m} \frac{1}{\ell} - 1 \right| \leq \frac{1}{n^2\left( \log n\right)^{(5/4) - \varepsilon}} .$$
\end{theorem}
One could further enforce that $\sum_{\ell=n}^m \frac{1}{\ell}>1$, but we chose not to for simplicity. This proof is non-constructive and relies on the existence of certain rational approximations of suitable form. One interesting aspect of this result is the comparison with the purely random case.  If $X_n$ is uniformly distributed in $[0,1/n]$, then
$$ \sum_{n=1}^{\infty} \mathbb{P}\left( X_n \leq \frac{1}{n^2(\log{n})^{1+\delta}} \right) = \sum_{n=1}^{\infty} \frac{1}{n(\log{n})^{1+\delta}} < \infty.$$
The random heuristic would predict the existence of a finite number of solutions and starts being inaccurate at that scale of resolution.\\

Regarding the second part of the question, as to whether $n^{2+\delta} \varepsilon_n \rightarrow \infty$, our arguments may be helpful in suggesting why one might expect this to be the case: the first part of our argument shows that any $(m, n) \in \mathbb{N}^2$ indexing a case where $H_{m} - H_{n}$ gets very close to 1 has to be connected to convergents $p/q$ coming from the continued fraction expansion of $e$ in a very explicit way: $(2m+1)/(2n-1) = p_k/q_k$ for some $k \in \mathbb{N}$. The numerator and denominator of convergents grows exponentially and one would expect the approximation quality to be polynomial in $k$ which predicts logarithmic approximation rates of the type shown above. Such logarithmic rates would then suggest that $n^{2+\delta} \varepsilon_n \rightarrow \infty$. However, ruling out an exceptional, very sparse subsequence with atypically good approximation properties would require entirely new arguments.

\subsection{Related results} We observe that summing over parts of the harmonic series has a long history (though these results all appear to be of a somewhat different flavor). A classic 1915 result of Theisinger \cite{theis} is that the sum $\sum_{k=1}^{n} 1/k$ can never be an integer. This result has since ben extended to many other settings, we refer to Nagell \cite{nagell}, Erd\H{o}s-Niven \cite{erdniv0, erdniv} and, more recently, Chen-Tang \cite{one}, Schinzel \cite{three} and Wang-Hong \cite{two} among others.

\section{Proofs}
\subsection{Outline} The argument comes in three parts. The first two parts are fairly general and work for an arbitrary $x>0$ (instead of only $x=1$). The third part requires additional knowledge and that is where we are going to set $x=1$. For now, let us fix some $x > 0$ and let us try to understand what would be required for
$$ \left| \sum_{k=n}^{m} \frac{1}{k}  - x \right| \leq \frac{\varepsilon}{n^2}$$
to have an infinite number of solutions. Since we are only interested in asymptotic results at scale $\sim n^{-2}$, the asymptotic expansion (see, for example, \cite{jam})
    $$ \sum_{\ell=1}^{n} \frac{1}{\ell}= \log{n} + \gamma + \frac{1}{2n} - \frac{1}{12n^2} + \mathcal{O}(n^{-4})$$
contains all the relevant information. Abbreviating
$$ f(n) = \log{n} + \gamma + \frac{1}{2n} - \frac{1}{12n^2},$$
the three steps of the proof are as follows.

\begin{enumerate}
    \item \textit{Asymptotics.}  Given an integer $n \in \mathbb{N}$, what can we say about the real numbers $m \in \mathbb{R}$ for which
    $$ \left| f(m) - f(n-1) - x \right| \leq \frac{\varepsilon}{n^2} \quad?$$
    Since there is a precise asymptotic expansion these real numbers $m$ can be described accurately in terms of $n$.
    \item \textit{Rational Approximation.} In order for the expression $f(m) - f(n-1)$ to actually correspond to the difference of two harmonic numbers, we need $m$ to be an integer. The question is thus whether any of the real numbers from the first part can ever end up being close enough to an integer to only cause a small additional error. Solutions of this problems happen to be related to convergents of the continued fraction expansion of $e^x$.
    \item \textit{Continued Fractions.} We use properties of continued fractions to get explicit constructions of such solutions: a result of Legendre implies that solutions of the relevant diophantine equation have to come from the continued fraction expansion: this suggests a natural one-parameter family of candidate solutions that, we show, contains suitable solutions.
\end{enumerate}

\subsection{Part 1. Asymptotics.}
 Let $x > 0$ be an arbitrary positive real. We think of $x$ as fixed and allow for all subsequent constants to depend on $x$. A trivial estimate
 $$ \sum_{k=n}^m \frac{1}{k} \sim \int_n^m \frac{dx}{x} = \log\left( \frac{m}{n} \right)$$
 shows that we will need $m = (1+o(1)) \cdot e^x \cdot n$ and this is correct up to leading order. Moreover, since, for $s = o(n)$,
\begin{align*}
 f(e^x n +s) - f(n-1) &= \log( e^x n + s) - \log{(n-1)} + \mathcal{O}(n^{-1})\\
 &= x + \frac{s}{e^x n} + \mathcal{O}(n^{-1}),
\end{align*}
where the implicit constant in $ \mathcal{O}(n^{-1})$ is independent of $n$. In order for the error to be small, one requires $m = e^x n + s$ with $|s| \leq c = \mathcal{O}(1)$. We shall now consecutively refine this and start with the first correction term at scale $n^{-1}$. The asymptotic expansion implies that, expanding now beyond the logarithmic and constant term to also include the term of order $n^{-1}$,
\begin{align*}
 f(e^x n +s) - f(n-1) &= \log\left(e^{x}n + s \right) - \log{(n-1)} \\
 &+ \frac{1}{2(e^{x}n + s)} - \frac{1}{2n-2} + \mathcal{O}(n^{-2}) \\
    &= x + \frac{e^{-x}(1 + e^{x} + 2s)}{2n} + \mathcal{O}(n^{-2}).
\end{align*}
We note that the error $\mathcal{O}(n^{-2})$ is uniform in the range $|s| \leq c$ which is the only relevant range since otherwise the leading order term $\log$ is already mismatched. In order for the distance to $x$ to be of order $\sim n^{-2}$, we require the first-order term to be at scale $\sim n^{-2}$. Moreover, in order for that distance to be $\sim \varepsilon/n^2$, we need the first-order term to be such that it matches and mostly cancels the other remaining $ \mathcal{O}(n^{-2})$ error term. Since the implicit constant in the $\mathcal{O}(n^{-2})$ term is uniformly bounded, we deduce that in order for the error to be $\sim n^{-2}$, we require
$$ s = - \frac{1+e^{x}}{2} + \frac{y}{n} \qquad \mbox{for some}~y \in \mathbb{R}$$
where $y$ is constrained to lie in a compact interval (whose size could be bounded in terms of the implicit constant in the remaining error term). We now return to the expression $f(e^x n +s) - f(n-1)$ with $s$ being of this form, expand it up to second order. After some computation,
$$ f\left(e^x n - \frac{1+e^{x}}{2} + \frac{y}{n}\right) - f(n-1) = x + \frac{24 e^{-x} y + e^{-2x} - 1}{24} \frac{1}{n^2} + \mathcal{O}(n^{-3}).$$
This shows that the only way the approximation can be as good as $o(n^2)$ is if $y$ is within distance $o(1)$ of
$$ y^{*} = \frac{e^{x} - e^{-x}}{24} = \frac{\sinh{x}}{12}.$$
The expression also shows that, as $n \rightarrow \infty$, 
$$ \left|f\left(e^x n - \frac{1+e^{x}}{2} + \frac{y}{n}\right) - f(n-1) - x\right| \leq \frac{\varepsilon}{n^2}$$
holds if $|y - y^*| \leq e^{x} \varepsilon/2$. 
If there is an infinite sequence of integers $n_k \in \mathbb{N}$ with
$$e^x n_k - \frac{1+e^{x}}{2} + \frac{y_{k}}{n_k} \in \mathbb{N} \qquad \mbox{and} \qquad \lim_{k \rightarrow \infty} y_k = y^*,$$
then there is an infinite number of approximations for which the distance between the harmonic number and $x$ is $o(n^{-2})$.

\subsection{Part 2. Rational Approximation.} The remaining question is whether there are infinitely many integers $n \in \mathbb{N}$ such that
\begin{align*}
e^x n - \frac{1+e^x}{2} + \frac{y}{n} = m \in \mathbb{N}
 \end{align*}
 with $y \in \mathbb{R}$ close to $y^* = \sinh{x}/12$.  

 \begin{lemma} If $n \in \mathbb{N}$ and
 $$ e^x n - \frac{1+e^x}{2} + \frac{y}{n} =m \in \mathbb{N},$$
     then  either $|y| \geq 1/8$ or $(2m+1)/(2n-1) \in \mathbb{Q}$ is a convergent coming from the continued fraction expansion of $e^x$.
 \end{lemma}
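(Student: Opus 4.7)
The plan is to recognize the equation $e^x n - \tfrac{1+e^x}{2} + \tfrac{y}{n} = m$ as a disguised Diophantine approximation statement and then invoke Legendre's classical theorem on continued fractions.

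First, I would clear denominators. Multiplying through by $2$ and rearranging, the hypothesis is equivalent to
$$ e^x(2n-1) - (2m+1) = -\frac{2y}{n}.$$
This brings out the natural pair of odd integers $(2m+1, 2n-1)$ that will serve as numerator and denominator of the approximation. Dividing by $2n-1$ gives
$$ \left| e^x - \frac{2m+1}{2n-1} \right| = \frac{2|y|}{n(2n-1)}.$$

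Second, I would show that the assumption $|y|<1/8$ puts us into the regime where Legendre's theorem applies. Indeed, if $|y|<1/8$, then the right-hand side above is strictly less than $1/(4n(2n-1))$, and the elementary inequality $2(2n-1)<4n$ gives
$$ \left| e^x - \frac{2m+1}{2n-1} \right| < \frac{1}{2(2n-1)^2}.$$
The threshold $1/8$ is precisely what is needed for this bound to hold for every $n$, which explains the choice of constant in the lemma.

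Third, I would apply Legendre's theorem: if $|\alpha - p/q|<1/(2q^2)$, then $p/q$ in lowest terms is a convergent of $\alpha$. A small subtlety is that $\gcd(2m+1,2n-1)$ need not equal $1$; however, writing the reduced form as $p'/q'$ with $q'\mid (2n-1)$ only improves the inequality (since $1/(2q^2)\le 1/(2(q')^2)$), so the reduced fraction is a convergent of $e^x$, and therefore $(2m+1)/(2n-1)$ is a convergent when viewed as an element of $\mathbb{Q}$.

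I do not expect any serious obstacle: the only step that requires care is the algebraic manipulation producing $(2m+1)/(2n-1)$ and the verification that the Legendre constant matches exactly at $|y|=1/8$; the rest is an immediate invocation of a classical theorem.
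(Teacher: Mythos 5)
Your proof is correct and follows essentially the same route as the paper: isolate $e^x$ to obtain $\bigl|e^x - \tfrac{2m+1}{2n-1}\bigr| = \tfrac{2|y|}{n(2n-1)}$, observe that $|y|<1/8$ pushes this below $\tfrac{1}{2(2n-1)^2}$, and invoke Legendre. Your extra remark about $\gcd(2m+1,2n-1)$ possibly exceeding $1$ is a small but genuine refinement that the paper leaves implicit; as you note, reducing to lowest terms only strengthens the Legendre hypothesis, so the conclusion holds for the rational number $(2m+1)/(2n-1)$ regardless.
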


This result will be used as follows: one is only interested in the cases where $y$ is close to $y^*$. The relevant question is thus when
$$ y^* =  \frac{\sinh{x}}{12} < \frac{1}{8} \quad \mbox{which requires} \quad x < \log\left(\frac{3 + \sqrt{13}}{2}\right) \sim 1.1947\dots$$
As long as this restriction on $x$ is satisfied, any such solution  
\begin{enumerate}
    \item either has $|y| \geq 1/8$ and $|y - y^*|$ is uniformly bounded from below  
    \item  or $(2m+1)/(2n-1)$ has to be a of a very peculiar form.
\end{enumerate}
In the first case, no approximation better than $\geq c \cdot n^{-2}$ is possible.

 \begin{proof}[Proof of the Lemma]   
The equation
 $$ m =  e^x n - \frac{1+e^x}{2} + \frac{y}{n}$$
can be rewritten as
$$ e^x = \frac{2m+1}{2n-1} - \frac{2y}{n (2n-1)}$$
and thus
$$ \left| e^x - \frac{2m+1}{2n-1} \right| = \frac{4|y|}{2n (2n-1)} < \frac{1}{2}\frac{8|y|}{(2n-1)^2}.$$
We invoke a classical result of Legendre (see, for example, \cite[Theorem 1.8]{bugeaud}) stating that if $\alpha >0$ is real and $p,q$ are positive integers such that
$$ \left| \alpha - \frac{p}{q} \right| \leq \frac{1}{2} \frac{1}{q^2},$$
then $p/q$ is a convergent derived from the continued fraction of $\alpha$. Hence, if $|y| \leq 1/8$, any such solution must come from the continued fraction expansion of $e^x$. 
 \end{proof}

\subsection{Convergents for $e$} We now focus on the case of $x=1$ and the case of convergents that approximate $e$. Some of the arguments may generalize to cases of $x$ such that one has enough information about the continued fraction expansion of $e^x$. Some examples are
\begin{align*}
    e^{1/2} &= [1; 1, 1, 1, 5, 1, 1, 9, 1, 1, 13, 1, 1, 17, 1, 1, 21, 1, 1, \dots] \\
 e^{1/3} &= [1; 2, 1, 1, 8, 1, 1, 14, 1, 1, 20, 1, 1, 26, 1, 1, 32, 1, 1, \dots] \\
 e^{1/4} &= [1; 3, 1, 1, 11, 1, 1, 19, 1, 1, 27, 1, 1, 35, 1, 1, 43, 1, 1, \dots]  
\end{align*}
 and, more generally, (see Osler \cite{osler}) the identity
$$ e^{1/k} = [ \overline{1,(k-1) + 2k m , 1}],$$
where the bar refers to periodic repetition and $m$ runs from 0 to $\infty$. A similar identity for $e^{2/(2n+1)}$ is given by Olds \cite{olds}.
We focus on
$$ e = [2; 1, 2, 1, 1, 4, 1, 1, 6, 1, 1, 8, 1, 1, \dots]$$
 This gives rise to a sequence $(p_k/q_k)_{k=1}^{\infty}$ of convergent fractions
 $$ \frac{2}{1}, \frac{3}{1}, \frac{8}{3}, \frac{11}{4}, \frac{19}{7}, \frac{87}{32}, \frac{106}{39}, \frac{193}{71}, \dots$$
We will use the subsequence $p_{3k+2}/q_{3k+2}$
$$ \frac{3}{1}, \frac{19}{7}, \frac{193}{71}, \frac{2721}{1001}, \dots$$
which has a number of useful properties.

\begin{lemma}
    For the subsequence of convergents  $p_{3k+2}/q_{3k+2}$, we have that
\begin{enumerate}
    \item $p_{3k+2}$ and $q_{3k+2}$ are odd
    \item and, for some $r_{3k+2} > 0$ 
    $$ e - \frac{p_{3k+2}}{q_{3k+2}} = (-1)^{k+1} \cdot\frac{ r_{3k+2}}{q_{3k+2}^2}$$
    \item and, moreover, $\frac{1}{2k+4}\leq r_{3k+2} \leq \frac{1}{2k+2}$.
\end{enumerate}
\end{lemma}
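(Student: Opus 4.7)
The plan is to handle the three items separately: (2) and (3) reduce to standard continued-fraction estimates, and (1) reduces to a periodicity analysis modulo $2$.

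For (2), I would invoke the classical fact that the convergents $p_k/q_k$ of an irrational number strictly alternate about it, so $\varepsilon_{3k+2} := q_{3k+2}^2 \, |e - p_{3k+2}/q_{3k+2}|$ is automatically positive. Since $p_1/q_1 = 2 < e$, one has $p_k/q_k < e$ when $k$ is odd and $p_k/q_k > e$ when $k$ is even; at index $k = 3j+2$ the parity of $k$ matches that of $j$, which gives the sign $(-1)^{j+1}$ claimed.

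For (3), I would combine the standard inequality $|e - p_k/q_k| \leq 1/(q_k q_{k+1})$ with the recurrence $q_{k+1} = a\, q_k + q_{k-1}$, where $a$ denotes the corresponding partial quotient of $e$. At the step from $3j+2$ to $3j+3$ this partial quotient is the large element $2(j+1)$ from the $1,1,2(j+1)$ block of the continued fraction. Hence $q_{3j+3} \geq 2(j+1)\, q_{3j+2}$, so $\varepsilon_{3j+2} \leq q_{3j+2}/q_{3j+3} \leq 1/(2(j+1)) \leq 1/j$ for $j \geq 1$, with $j=0$ trivial.

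For (1), I would carry out a parity analysis modulo $2$. From the explicit expansion, the partial quotients of $e$ are even precisely at the distinguished position of each $1,1,2m$ block, so their parity sequence is periodic of period $3$. The pair $(p_k, q_k) \pmod{2}$ therefore evolves under a linear recurrence with three-periodic coefficients, and the extended state $(p_{k-1}, q_{k-1}, p_k, q_k) \pmod{2}$ takes values in a finite set. A direct calculation on the first twelve convergents shows this state is periodic of exact period $6$, and that at each position within a period whose index is $\equiv 2 \pmod{3}$ the pair $(p_k, q_k)$ reduces to $(1,1)$. The induction step is then automatic: translating the indexing by six preserves both the state and the forthcoming partial-quotient parities.

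The main obstacle is (1): though conceptually routine, one must align the paper's shifted indexing with the standard continued-fraction recurrence and track twelve parities by hand without slipping. Parts (2) and (3) are essentially immediate from textbook continued-fraction inequalities together with the explicit identity $a_{3k+2} = 2(k+1)$ in the expansion of $e$.
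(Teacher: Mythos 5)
Your proposal is correct and follows essentially the same route as the paper: part (2) via the alternating-convergent property combined with the parity of the step size $3$, part (3) via the bound $\varepsilon_k < q_k/q_{k+1} \le 1/a_{k+1}$ together with $a_{3k+3} = 2(k+1)$, and part (1) via a mod-$2$ parity induction on the recurrence exploiting the period-$3$ pattern of the partial-quotient parities. The paper phrases (1) as an explicit period-$6$ parity table for $(p_{6k+i}, q_{6k+i})$ proved by induction, which is exactly the state-machine periodicity you describe, just packaged more concisely.
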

\begin{proof} The subsequence $p_{3k+2}/q_{3k+2}$ corresponds to taking three consecutive continued fraction expansion coefficients of the form $(2\ell, 1, 1)$. 
For the first claim, we prove more generally by induction that
\begin{enumerate}
    \item $p_{6k+i}$ is odd for $i\in \{2,4,5,6\}$ and is even for $i\in \{1,3\}$
    \item and, $q_{6k+i}$ is odd for $i\in \{1,2,3,5\}$ and is even for $i\in \{4,6\}$.
\end{enumerate}
This is easily seen from the recurrence
$$ p_{n} = a_n p_{n-1} + p_{n-2} \qquad \mbox{as well as} \qquad  q_{n} = a_n q_{n-1} + q_{n-2},$$
and the fact that $a_{3k+3}$ is even while $a_{3k+1},a_{3k+2}$ are odd.
The second part of the Lemma is a more general fact for convergents. More precisely, one has
$$ \frac{p_k}{q_k} - \frac{p_{k-1}}{q_{k-1}} = \frac{(-1)^{k}}{q_k q_{k-1}}$$
and thus, by telescoping
$$ e = 2 + \sum_{k=0}^{\infty} \frac{(-1)^{k+1}}{q_k q_{k+1}}.$$
Hence, convergents alternatingly over- and underestimate the number they are approximating.  The subsequence $p_{3k+2}/q_{3k+2}$ corresponds to step sizes of length 3 and since 3 is odd, this alternating property remains preserved. As for the third property, we recall the general inequality (see, for example, Bugeaud \cite{bugeaud})
$$ \frac{1}{q_k (q_{k+1} + q_k)} < \left|e - \frac{p_k}{q_k} \right| < \frac{1}{q_k q_{k+1}}$$
from which we derive
$$ \left|e - \frac{p_k}{q_k} \right| q_k^2 < \frac{q_k}{q_{k+1}} = \frac{q_k}{a_{k+1} q_k + q_{k-1}} \leq \frac{1}{a_{k+1}}.$$
In our case, we have $a_{3k+3} = 2k+2$ and the desired inequality follows. The lower bound also follows similarly.
\end{proof}

\subsection{Proof of the Theorem}
Using that $p_{3k+2}, q_{3k+2}$ are odd, we can now, for every $d \in 2 \mathbb{N} + 1$, find a $m,n \in \mathbb{N}$ satisfying
$$ 2m+1 = d \cdot p_{3k+2} \qquad \mbox{and} \qquad 2n-1 = d \cdot q_{3k+2}.$$
Then, plugging in, we get that
\begin{align*}
     e &=  \frac{p_{3k+2}}{q_{3k+2}} +  (-1)^{k+1} \cdot\frac{ r_{3k+2}}{q_{3k+2}^2} \\
     &= \frac{2m+1}{2n-1} + (-1)^{k+1} \cdot \frac{r_{3k+2} \cdot d^2}{(2n-1)^2} \\
     &= \frac{2m+1}{2n-1} + (-1)^{k+1} \cdot \frac{r_{3k+2} \cdot d^2}{n(2n-1)} \frac{n}{2n-1}.
\end{align*}

The question is thus whether we can find a suitable choice $k,d$  such that
$$ (-1)^{k+1} \cdot r_{3k+2} \cdot d^2 \cdot \frac{n}{2n-1} \qquad \mbox{can get close to}\qquad -2y^* = -\frac{\sinh{(1)}}{6}.$$
The sign forces $k$ to be even, the factor $n/(2n-1)$ converges to $1/2$ contributes only lower order effects.

\begin{lemma}
    Let $k \in 2\mathbb{N}$ be sufficiently large. There exists $d \in 2 \mathbb{N} + 1$ such that
    $$ \frac{1}{100\sqrt{k}}\leq r_{3k+2} \cdot d^2 \cdot \frac{n}{2n-1} - \frac{\sinh{(1)}}{6} \leq \frac{100}{\sqrt{k}}.$$
\end{lemma}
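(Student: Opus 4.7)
The plan is to reduce the lemma to a pigeonhole-style statement about how well odd squares $d^2$ approximate a target of size $\Theta(k)$. The crucial scale match is the following: $\varepsilon_{3k+2}$ has order $1/k$, while consecutive odd squares near the target are spaced $\Theta(\sqrt{k})$ apart, so multiplying these two scales yields precisely the desired $1/\sqrt{k}$ bound.

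First I would simplify the factor $n/(2n-1)$. Using $2n-1 = d\,q_{3k+2}$,
$$\frac{n}{2n-1} = \frac{1}{2} + \frac{1}{2\,d\,q_{3k+2}},$$
so the second term contributes only $\varepsilon_{3k+2}\,d/(2\,q_{3k+2})$ to $\varepsilon_{3k+2}\,d^2\,n/(2n-1)$. Since $q_{3k+2}$ grows super-polynomially in $k$ while the $d$ chosen below is $O(\sqrt{k})$, this piece is $o(1/\sqrt{k})$. Hence it suffices to find an odd $d$ with $|\varepsilon_{3k+2}\,d^2 - \sinh(1)/3|$ somewhat smaller than $4/\sqrt{k}$.

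Next I would use two-sided control on $\varepsilon_{3k+2}$: the previous lemma gives $\varepsilon_{3k+2} \leq 1/k$, and the lower estimate $\tfrac{1}{q_k(q_{k+1}+q_k)} < |e - p_k/q_k|$ combined with $a_{3k+3} = 2k+2$ yields $\varepsilon_{3k+2} \geq 1/(2k+4)$. Set $T_k := \sinh(1)/(3\varepsilon_{3k+2})$, so $T_k = \Theta(k)$. I would then pick $d$ as the largest odd integer with $d^2 \leq T_k$, giving $d = \Theta(\sqrt{k})$. The next odd integer is $d+2$ with $(d+2)^2 - d^2 = 4d+4$, so $|d^2 - T_k| \leq 4d+4$ and therefore
$$\left| \varepsilon_{3k+2}\,d^2 - \frac{\sinh 1}{3} \right| \leq (4d+4)\,\varepsilon_{3k+2} \leq \frac{2(d+1)}{k+1} = O\!\left(\frac{1}{\sqrt{k}}\right).$$
Bookkeeping on constants, using $d+1 \leq \sqrt{T_k}+1$, recovers the factor $4$ in the claimed bound $4/\sqrt{k}$, with the negligible $n/(2n-1)$ correction absorbed into the slack.

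The only mildly delicate point is establishing the matching lower bound $\varepsilon_{3k+2} \geq c/k$: without it, $T_k$ could a priori be far larger than $k$, odd squares could be sparser than $\sqrt{k}$ near $T_k$, and the argument would degrade. Once that lower bound is in hand, the rest is essentially a calculation.
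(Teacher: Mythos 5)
Your approach is essentially the paper's: both pick $d$ as the odd integer nearest the real solution $d^*$ of $\varepsilon_{3k+2}(d^*)^2\cdot\tfrac{n}{2n-1}=\sinh(1)/6$, and both bound the resulting error by (spacing of odd squares near $(d^*)^2$) $\times\,\varepsilon_{3k+2}$. Your explicit separation of $n/(2n-1)=\tfrac12+\tfrac{1}{2dq_{3k+2}}$ and the observation that the second piece is exponentially negligible is correct and, if anything, a bit cleaner than the paper's, which folds $(2n-1)/n$ into $d^*$ even though $n$ itself depends on $d$.

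Your closing remark, though, misdiagnoses the structure of the estimate: the lower bound $\varepsilon_{3k+2}\geq c/k$ is \emph{not} needed, and the paper never uses one. The chain is
$$\varepsilon_{3k+2}\,|d^2-T_k|\;\leq\;\varepsilon_{3k+2}(4d+4)\;\leq\;4\,\varepsilon_{3k+2}\sqrt{T_k}+4\,\varepsilon_{3k+2}\;=\;4\sqrt{\tfrac{\sinh 1}{3}}\,\sqrt{\varepsilon_{3k+2}}+4\,\varepsilon_{3k+2},$$
and the \emph{upper} bound $\varepsilon_{3k+2}\leq 1/k$ (or the sharper $1/(2k+2)$) alone converts this into $O(1/\sqrt{k})$. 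If $\varepsilon_{3k+2}$ were much smaller than $1/k$, then $T_k$ would be larger and odd squares sparser near $T_k$, as you say; but the gap scales like $\sqrt{T_k}\sim\varepsilon_{3k+2}^{-1/2}$, so after multiplying by $\varepsilon_{3k+2}$ the error is $\sim\sqrt{\varepsilon_{3k+2}}$, which only gets \emph{smaller}. So far from degrading, the argument improves when $\varepsilon_{3k+2}$ shrinks; the two-sided control you establish is harmless but superfluous. (The lower bound is also what lets you write the intermediate $2(d+1)/(k+1)$; with only $\varepsilon_{3k+2}\leq 1/(2k+2)$ you get the same thing, so the constant accounting is fine either way.)
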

\begin{proof}
    If we were allowed to choose real numbers, we could choose
    $$ d^* = \left(\frac{2n-1}{n} \frac{1}{r_{3k+2}} \frac{\sinh{(1)}}{6} \right)^{1/2} \approx \left( \frac{0.097}{r_{3k+2}} \right)^{1/2}$$
    to force the expression to be 0. This expression grows roughly at the rate of $d^* \sim \sqrt{k}$. However, we are forced to choose $d$ to be an odd integer: picking $d$ to be the closest odd integer to $d^*+2$, we have $d^*+1\leq d\leq d^*+3$ so that
   $$ \frac12 d^* r_{3k+2} \leq r_{3k+2} \cdot d^2 \cdot \frac{n}{2n-1} - \frac{\sinh{(1)}}{6} \leq 7 d^* r_{3k+2}.$$
   Using the estimate $\frac{1}{2k+4}\leq r_{3k+2}\leq \frac{1}{2k+2}$, the result follows.
 \end{proof}

\subsection{Proof of Theorem 1}
\begin{proof} The previous Lemma shows that we can find suitable parameters for the relevant quantity to
 get arbitrarily close to $-2 y^*$. The argument comes with the quantitative rate $1/(200\sqrt{k})\leq y-y^*\leq 50/\sqrt{k}$
 and shows that solutions that are so constructed, satisfy $m_k \sim \sqrt{k} \cdot p_{3k+2}$ and $n_k \sim \sqrt{k} \cdot q_{3k+2}$ and satisfy
 $$\frac{1}{1000n_k^2\sqrt{k}}\leq f(m_k)-f(n_k-1)-1\leq \frac{1000}{n_k^2\sqrt{k}}.$$
 The difference between $f(m_k)-f(n_k-1)$ and $H_{m_k}-H_{n_k-1}$ is $\mathcal{O}(1/n_k^4)$, negligible compared to $1/(n_k^2\sqrt{k})$, so we have
  $$ 1 \leq  \sum_{\ell = n_k}^{m_k} \frac{1}{\ell}  \leq 1+\frac{1001}{\sqrt{k}} \frac{1}{n_k^2}.$$
\end{proof}

\section{Proof of Theorem 2}
The previous section derived approximation bounds from each continued fraction expansion convergent $p_{3k+2}/q_{3k+2}$: for each such convergent, we identified a suitable $d \in 2\mathbb{N}+1$ with a uniform bound. One might be inclined to believe that some values of $k$ should give rise to better values of $d$: instead of fixing $k$ and finding $d$, we will interpret it as a diophantine problem jointly in $(d,k)$.

\subsection{Sharper estimates.}
We first give a slightly sharper estimate for $r_{3k+2}$.
\begin{lemma} We have
    $$r_{3k+2}^{-1} = 2k + 3 + \mathcal{O}(1/k).$$
\end{lemma}
\begin{proof}
From the continued fraction expansion, we have
$$e=\frac{p_{3k+1}+w_k p_{3k+2}}{q_{3k+1}+w_k q_{3k+2}},$$
where $w_k=[2k+2;1,1,2k+4,1,1,2k+6,\ldots]$. Then
\begin{align*}
    \left| e-\frac{p_{3k+2}}{q_{3k+2}}\right| &= \left| \frac{p_{3k+1}+w_k p_{3k+2}}{q_{3k+1}+w_k q_{3k+2}}-\frac{p_{3k+2}}{q_{3k+2}}\right|\\
    &= \frac{1}{q_{3k+2}(q_{3k+1}+w_k q_{3k+2})}= \frac{r_{3k+2}}{q_{3k+2}^2}.
\end{align*}
Let $c_k=q_{3k+1}/q_{3k+2}$, then $0<c_k<1$ and $r_{3k+2}^{-1}=c_k+w_k$.
From the recurrence relations, we have
\begin{align*}
    q_{3k+3} &=(2k+2)q_{3k+2}+q_{3k+1}=(2k+2+c_k)q_{3k+2} \\
    q_{3k+4} &=(2k+3+c_k)q_{3k+2} \\
    q_{3k+5} &=(4k+5+2c_k)q_{3k+2}
\end{align*}
Thus $$c_{k+1}=\frac12+\frac{1}{2(4k+5+2c_k)}=\frac12+\mathcal{O}(1/k),$$ 
and we also have $c_k=1/2+\mathcal{O}(1/k)$. Moreover,
$$w_k=2k+2+\frac{1}{1+\frac{1}{1+\frac{\mathcal{O}(1)}{k}}}=2k+\frac52+\mathcal{O}(1/k).$$
Therefore, $r_{3k+2}^{-1}=2k+3+\mathcal{O}(1/k)$.
\end{proof}

In particular, we have now a fairly precise control on the approximation rate
$$ \left| e-\frac{p_{3k+2}}{q_{3k+2}}\right| = \frac{1}{q_{3k+2}^2} \frac{1}{2k + 3 + \mathcal{O}(1/k)}.$$
The next step of the argument consists in finding good approximations of a rational number $p/q$ by rational numbers of the form $m/n^2$.

\subsection{Counting Solutions.}

For $x\in \mathbb{R}$, write $e(x)=e^{2\pi ix}$ and $\lVert x\rVert$ to be the distance from $x$ to the nearest integer. We will make use of a specific form of the Erd\H{o}s-Tur\'an theorem to turn the number of approximate solutions into an estimate involving exponential sums. The specific of Erd\H{o}s-Tur\'an that we use can be found in the book of Montgomery \cite{mont}.

\begin{lemma}[Erd\H{o}s-Tur\'an, \cite{mont}] \label{lem:etcount}
    Let $x_1,\ldots,x_N$ be real, let $I=[a,b] \subset \mathbb{T} \cong [0,1]$ be an interval of length $\delta=b-a<1$. For any $L \in \mathbb{N}$,
    \begin{align*}
            |\#\{1 \leq n \leq N: x_n \in I \pmod{1} \}-N\delta| &\leq \frac{N}{L+1} + E,
    \end{align*}
where
$$ E = 2 \sum_{m=1}^{L}\left(\frac{1}{L+1}+\min(b-a, \frac{1}{\pi m}) \right)\left| \sum_{n=1}^{N} e( m x_n) \right|.$$
\end{lemma}
We will use the more compact version that does not distinguish between $m$ small and $m$ large and bound the error uniformly by
$$ E \leq 2 \left( \frac{1}{L+1} + \delta\right) \sum_{m=1}^{L} \left| \sum_{n=1}^{N} e( m x_n) \right|.$$
We sometimes abbreviate the exponential sum as $S_m=\sum_{n=1}^{N} e( m x_n)$. Erd\H{o}s-Tur\'an then implies the following Lemma (see \cite[Chapter 3]{baker}).

\begin{lemma}\label{lem:count}
    Let $q\geq 1$ and $p$ be coprime to $q$. Let $b\in\mathbb{N}$, $a\in \mathbb{Z}$ and $r\in \mathbb{R}$. Then, for $\eta>0,\delta\in (0,1/2), N\geq 1$, the number of $n\in \mathbb{N}$ with
    \begin{align} \label{eqn:countlem}
    \left\lVert \frac{pn^2}{q}-r\right\rVert<\delta,\qquad n\leq N,\qquad n\equiv a\pmod{b}
    \end{align}
    is
    $$\frac{2N\delta}{b} (1+\mathcal{O}(N^{-\eta}))+\mathcal{O}\left(\frac{N^{1+2\eta}}{\delta^{\eta}}\left( \frac{\log q}{N}+\frac{1}{q}+\frac{q\delta\log q}{N^2}\right)^{\frac12}\right).$$
    Here the implied constants depend only on $b,\eta$.
\end{lemma}
\begin{proof} We use the interval $I = [-\delta, \delta]$ and consider the set of points
$$y_n=\frac{pn^2}{q}-r$$
for $n=1,\ldots,N$. We let $\eta > 0$ be arbitrary and set $L=N^\eta /\delta$. The set of points $x_i$ will be exactly all $y_n$ with $n\equiv a\pmod{b}$. The cardinality of the set will be approximately $N/b + \mathcal{O}(1)$. Then, with Erd\H{o}s-Tur\'an, we have that the number of solutions to (\ref{eqn:countlem}) satisfies
    \begin{align*}
    \left| \mbox{solutions} -\frac{2\delta N}{b} \right| &\leq \frac{N}{L}+2\left(\frac{1}{L+1}+\delta\right)\sum_{m=1}^L |S_m| \leq \delta \left( N^{1-\eta} + 3 \sum_{m=1}^L |S_m|\right).
    \end{align*}
    Recall that $S_m=\sum_{n=1}^N e(mx_n)$, then 
    \begin{align*}
        |S_m|^2 &= \sum_{n_1,n_2} e(m(x_{n_1}-x_{n_2}))= \sum_{n_1,n_2} e\paren{\frac{mp}{q}(n_1^2-n_2^2)}= \sum_{u,v} e\paren{\frac{mp}{q}uv},
    \end{align*}
    where $u=n_1+n_2,v=n_1-n_2$, and the sum is over an appropriate set of pairs $(u,v)$. Note that $u,v$ each assume less than $2N$ possible values. 
    Recalling the restriction $n\equiv a\pmod{b}$ we see that $n_1 = k_1 b + a$ and $n_2 = k_2b + a$ and the variables $k_1, k_2$ range over $0 \leq k_1, k_2 \leq N/b + \mathcal{O}(1)$.
    We will use the triangle inequality once more
    $$ \left| \sum_{u,v} e\paren{\frac{mp}{q}uv} \right| \leq \sum_v \left| \sum_{u} e\paren{\frac{mp}{q}uv} \right|,$$
    where the values attained by $u$ in the inner sum depend on the value of $v$ in the outer sum. Fixing one particular value of $v$,
    $$ v = n_1 - n_2 = b (k_1 - k_2),$$
    forces $k_1$ and $k_2$ to be a fixed difference, say $d = v/b$, apart. Then the admissible values of $u$ are
    $$ u = n_1 + n_2 = (k_1+k_2)b + 2a = 2 k_1 b + 2a + d$$
    and thus is an arithmetic progression of length $\leq n/b + \mathcal{O}(1)$ and common difference $2b$. Exponential sums over arithmetic progressions are really exponential sums of the form $\sum_{x=U+1}^{U+V} e(\gamma x)$ for some $\gamma\in\mathbb{R}$. The two easy estimates are
    $$ \left| \sum_{x=U+1}^{U+V} e(\gamma x) \right| \leq V \qquad \mbox{and} \qquad  \left| \sum_{x=U+1}^{U+V} e(\gamma x) \right| \leq \frac{2}{\|\gamma\|},$$
where the second estimate follows from explicity computing and bounding the geometric series.   Applying this, we have (treating $v = 0$ and $v \neq 0$ separately)
    \begin{align*}
        |S_m|^2 &\leq \sum_{v=-N}^N \abs{\sum_u e\paren{\frac{mpv}{q}u}} \leq N + 4\sum_{v=1}^N \min\paren{N,\norm{\frac{mpv}{q} 2b}^{-1}}\\
        &\leq 5\sum_{v=1}^N \min\paren{N,\norm{\frac{mpv}{q} 2b}^{-1}}.
    \end{align*}
    By Cauchy-Schwarz,
    \begin{align*}
        \sum_{m=1}^L |S_m| &\leq \sqrt{L\sum_{m=1}^L |S_m|^2} \leq \sqrt{5L\sum_{m=1}^L \sum_{v=1}^N \min\paren{N,\norm{\frac{2bmpv}{q}}^{-1}}}.
    \end{align*}  
 %   For any positive integer $z\leq LN\leq N^{3+\eta}$ {\color{red} (um I guess we need some bound on $L$ here)}, the number of divisors it has is $z^{o(1)}\leq N^{\eta}$ {\color{red} (citation needed?)}. So the above expression is at most
 %   $$\sqrt{LN^{\eta}\sum_{z=1}^{LN} \min\paren{N,\norm{\frac{2pz}{q}}^{-1}}}.$$
At this point, we note that the terms $1 \leq m \leq L$ and $1 \leq v \leq N$ appear only as a product $mv$. This suggests replacing the two sums by a sum $1 \leq z = mv \leq LN$. We note that each fixed integer $z = mv$ can arise in possibly more than one way (meaning through various combinations of different $m$ and $v$). An easy upper bound is the number of divisors $d(z)$ of $z$: using a bound, valid for every $\varepsilon > 0$ with a sufficiently large constant $c_{\varepsilon}$,
$$ \max_{1 \leq z \leq LN} d(z) \leq c_{\varepsilon} (NL)^{\varepsilon}.$$
Hence
$$ \sum_{m=1}^L \sum_{v=1}^N \min\paren{N,\norm{\frac{2bmpv}{q}}^{-1}} \leq  c_{\varepsilon} (NL)^{\varepsilon} \sum_{z=1}^{LN} \min\paren{N,\norm{\frac{2bp z}{q}}^{-1}}$$
 
    Now let $2bp/q=p'/q'$ be its simplest form, so that $q/(2b)\leq q'\leq q$. Then 
    $$\sum_{z=1}^{q'} \min\paren{N,\norm{\frac{p'z}{q'}}^{-1}}\leq N+\sum_{k=1}^{q'-1}\frac{q'}{k}\leq N+q'\log q'.$$
    Thus by summing in groups of $q'$, we have
    \begin{align*}
        \sum_{z=1}^{LN} \min\paren{N,\norm{\frac{2pz}{q}}^{-1}} &\leq (N+q\log q)\paren{\frac{NL}{q'}+1}\\
        &\leq 4b\left( \frac{N^2L}{q}+NL\log q+q\log q\right).
    \end{align*}
    Therefore, choosing $\varepsilon = \eta/100$,
    \begin{align*}
        \delta\sum_{m=1}^L |S_m| &\leq 20 b \delta  c_{\varepsilon} (NL)^{\varepsilon} \left(\frac{N^2L^2}{q}+NL^2\log q+qL\log q\right)^{\frac12}\\
        &\leq c_{\eta,b} \frac{N^{1+2\eta}}{\delta^{\eta}}\paren{\frac{1}{q}+\frac{\log q}{N}+\frac{q\delta\log q}{N^2}}^{\frac12}.
    \end{align*}
\end{proof}

\begin{lemma} 
    Let $\varepsilon>0$ and $\alpha>0$ be irrational. Let $a,a',b,b'$ be integers with $b,b'>0$. Then there are infinitely many pairs of integers $m,n>0$ such that $n\equiv a\pmod{b}$ and $m\equiv a'\pmod{b'}$ and
    $$\left| \alpha-\frac{m}{n^2}\right| < \frac{1}{n^{5/2-\varepsilon}}.$$
\end{lemma}
\begin{proof}
    Let $p/q$ be a convergent from the continued fraction expansion of $\alpha$. Then
    $$ \left| \alpha - \frac{p}{q} \right| \leq \frac{1}{q^2}.$$
    The remainder of the argument is concerned with approximating $p/q$ by a rational number of the form $m/n^2$ and for this we use Lemma~\ref{lem:count}. We fix $\varepsilon > 0$ as well as $\eta=\varepsilon/4$ and choose $N$ such that $q^2=N^{5/2-\varepsilon}$ and set $\delta=N^{-1/2+\varepsilon}$. 
    By Lemma~\ref{lem:count}, the number of $n\leq N$ with $n\equiv a\pmod{b}$ satisfying
    \begin{align}
        \norm{\frac{pn^2}{qb'}-\frac{a'}{b'}} < \frac{\delta}{b'},\label{eqn:count}
    \end{align}
    is $2N^{1/2+\varepsilon}/bb'+O(N^{1/2+\varepsilon/2})$ which implies the existence of at least one such solution once $q$ (and thus $N$) is sufficiently large. In particular, it has arbitrarily many solutions as $q\to\infty$. Let $m'$ be the integer closest to $pn^2/(qb')- a'/b'$ and $m=a'+b'm'$. Then 
    $$\abs{\frac{pn^2}{q}-m}<\delta.$$
    Since $|\alpha-p/q|<1/q^2$, we have
    \begin{align*}
        \left| \alpha-\frac{m}{n^2}\right| &\leq \left|\alpha-\frac{p}{q}\right| + \left|\frac{p}{q}-\frac{m}{n^2}\right|\leq \frac{1}{q^2}+\frac{\delta}{n^2}\leq\frac{2}{n^{5/2-\varepsilon}}.
    \end{align*}
\end{proof}

\subsection{Proof of Theorem 2}
The proof of the main result uses Lemma 7 applied to the number $\alpha = 3/\sinh{(1)}$. This requires us to argue that $\sinh{(1)}$, or equivalently, $2 \sinh{(1)} = e - e^{-1}$ is irrational. If $e - 1/e = p/q$, then $e$ would be a root of the polynomial $p(x) = q x^2 - px - q$ which contradicts $e$ being transcendental. 

\begin{lemma}
    Let $\varepsilon>0$. There exists infinitely many pairs $(k,d)$ with $k$ even and $d$ odd such that
    $$ \left| r_{3k+2} \cdot d^2 \cdot \frac{n}{2n-1} - \frac{\sinh{(1)}}{6} \right| < \frac{1}{k^{5/4-\varepsilon}}.$$
\end{lemma}
\begin{proof}
    Recall that $2n-1=d \cdot q_{3k+2}$ and $ q_{3k+2}$ grows slightly faster than exponentially in $k$. Moreover, $r_{3k+2} \sim 1/k$ and thus $d \sim \sqrt{k}$. Thus
    $$ \left|r_{3k+2} \cdot d^2 \cdot \frac{n}{2n-1} - r_{3k+2} \cdot d^2 \cdot\frac{1}{2}  \right| \leq \frac{100}{n} \ll \frac{1}{k^{10}}.$$
     We can thus simplify the problem by instead asking for an approximation where $n/(2n-1)$ has been replaced by $1/2$.
     Since $d^2\sim k$, we wish to find $(k,d)$ with
    $$\left| r_{3k+2} \cdot d^2 - \frac{\sinh{(1)}}{3} \right| \lesssim \frac{1}{d^{5/2-\varepsilon}}.$$
    By Lemma 7, we can find infinitely many pairs $(k',d)$ with $k'\equiv 3\pmod{4}$ and $d$ odd satisfying
    $$\left| \frac{3}{\sinh{(1)}}-\frac{k'}{d^2}\right| \lesssim \frac{1}{d^{5/2-\varepsilon}}.$$
    Let $k$ be the even number so that $2k+3=k'$. Then the above is equivalent to
    $$\left| \frac{d^2}{2k+3}-\frac{\sinh{(1)}}{3}\right| \lesssim \frac{1}{d^{5/2-\varepsilon}}.$$
    Finally, let $r_{3k+2}^{-1}=2k+3+\varepsilon_2$ with $\varepsilon_2=\mathcal{O}(1/k)$. Then
    $$\left|r_{3k+2}d^2-\frac{d^2}{2k+3}\right|=\mathcal{O}\left( \frac{d^2\varepsilon_2}{k^2}\right)=\mathcal{O}\left( \frac{1}{d^3}\right)$$
    and then, via triangle inequality,
    $$\left|r_{3k+2}d^2- \frac{\sinh{(1)}}{3} \right| \leq 
    \left|r_{3k+2}d^2- \frac{d^2}{2k+3} \right| + \left| \frac{d^2}{2k+3}-\frac{\sinh{(1)}}{3}\right| \lesssim \frac{1}{d^{5/2-\varepsilon}}.$$
\end{proof}

Since $\log q_k \sim k \log{k}$, this leads to the estimate
$$\left| \sum_{\ell = n_k}^{m_k} \frac{1}{\ell} - 1 \right|\cdot n_k^2 \leq \frac{c}{\log^{5/4-\varepsilon} n_k}.$$

\vspace{10pt}
\textbf{Acknowledgment.} 
JL was partially supported by an NUS Overseas Graduate Scholarship. SS was partially supported by the NSF (DMS-212322) and is grateful for discussions with Vjeko Kovac.

\end{document}